\newcommand{\subj}[1]{\par\noindent{\bf Mathematics Subject Classification 2010: }#1.}
\newcommand{\keyw}[1]{\par\noindent{\bf Keywords: }#1.}
\theoremstyle{definition}
\newtheorem{definition}{Definition}
\newtheorem{theorem}{Theorem}
\newtheorem{lemma}{Lemma}
\theoremstyle{remark}
\def\a{\alpha}
\def\r{\rho}
\def\t{\tau}
\def\DS{\displaystyle}
\def\LD{{^C\mathcal{D}_{a+}^{\a,\r}}}
\def\LDRL{{\mathcal{D}_{a+}^{\a,\r}}}
\def\LI{{\mathcal{I}_{a+}^{\a,\r}}}
\def\RD{{^C\mathcal{D}_{b-}^{\a,\r}}}
\def\RDRL{{\mathcal{D}_{b-}^{\a,\r}}}
\def\RI{{\mathcal{I}_{b-}^{\a,\r}}}
\begin{document}

\title{A Gronwall inequality for a general Caputo fractional operator}

\author{Ricardo Almeida\\
{\tt ricardo.almeida@ua.pt}}

\date{$^1$Center for Research and Development in Mathematics and Applications (CIDMA)\\
Department of Mathematics, University of Aveiro, 3810--193 Aveiro, Portugal}

\maketitle


\begin{abstract}

In this paper we present a new type of fractional operator, which is a generalization of the Caputo and Caputo--Hadamard fractional derivative operators. We study some properties of the operator, namely we prove that it is the inverse operation of a generalized fractional integral. A relation between this operator and a Riemann--Liouville type is established. We end with a fractional Gronwall inequality type, which is useful to compare solutions of fractional differential equations.

\end{abstract}

\subj{26A33, 34A08, 34A40}

\keyw{fractional calculus, fractional differential equations, Gronwall inequality}


\section{Introduction}

Fractional calculus is an important subject with numerous applications to different fields outside mathematics like physics \cite{Carpintery,Mainardi,West}, chemistry \cite{Bagley,Douglas,Kaplan}, biology \cite{Arafa,Magin,Sebaa,Xu}, engineering \cite{Duarte,Feliu,Ortiguera,Silva}, etc. It allows us to define derivatives and integrals of non-integer order, which may be more suitable to model real world phenomena, and nowadays this subject is not only important in mathematics, but also by its numerous employments in applicable sciences. We find in the literature several definitions for fractional operators, although the most important ones are the Riemann--Liouville, the Caputo and the Grunwald--Letnikov fractional derivatives \cite{Kilbas,Samko}. The choice of the best operator depends on the analysis of the system, and because of this we find a vast work dealing with different operators, for similar subjects. To overcome this situation, one solution is to consider general definitions for fractional operators, for which we can recover the classical ones as particular cases. For example, using general Kernels we can obtain some of the most important fractional operators \cite{Odzijewicz1,Odzijewicz2}. In Section \ref{sec:FC} we do a review of some of the most important notions when dealing with fractional derivatives and integrals.
In \cite{Katugampola1,Katugampola2,Katugampola3}, U. Katugampola presents a general form of fractional operator, by introducing a new parameter $\r>0$, for which we can obtain the Riemann--Liouville fractional operators when $\r=1$, and the Hadamard fractional operators as $\r\to0^+$. Later, in \cite{Almeida}, the authors present a Caputo type fractional derivative of order $\a\in(0,1)$, and some properties are proven. In this work, we start by defining a Caputo--Katugampola fractional derivative of arbitrary real order $\a>0$, and as we shall see it is the inverse operator of the Katugampola fractional integral. Several properties of the new fractional derivative operator are studied in Section  \ref{sec:CK}. To end, in Section \ref{sec:Gronwall}, we present and prove a fractional Gronwall inequality type, generalizing the ones presented in  \cite{Lin,Qian,Ye}.

\section{Preliminaries on fractional calculus}\label{sec:FC}

Let $x:[a,b]\to\mathbb R$ be an integrable function.
Starting with the Cauchy's formula for an n-fold integral
$$\int_a^t d\t_1\,\int_a^{\t_1}d\t_2\,\ldots \int_a^{\t_{n-1}}x(\t_n) \, d\t_n=\frac{1}{(n-1)!}\int_a^t (t-\t)^{n-1}x(\t) d\t,$$
we find a direct generalization for integrals of arbitrary real order $\a>0$. The Riemann-Liouville fractional integral of order $\a$ of $x$ is defined as
$${I_{a+}^\a}x(t)=\frac{1}{\Gamma(\a)}\int_a^t (t-\t)^{\a-1}x(\t) d\t,$$
where $\Gamma(\cdot)$ denotes the Gamma function.
Later, by considering the formula
$$\int_a^t \frac{1}{\t_1}\, d\t_1\,\int_a^{\t_1} \frac{1}{\t_2}\, d\t_2\,\ldots \int_a^{\t_{n-1}} \frac{1}{\t_n}x(\t_n)\, d\t_n
=\frac{1}{(n-1)!}\int_a^t \left(\ln\frac{t}{\t}\right)^{n-1}\frac{x(\t)}{\t} d\t,$$
Hadamard defined a new type of fractional operator, known nowadays as Hadamard fractional integral \cite{Hadamdard}:
$${^HI_{a+}^\a}x(t)=\frac{1}{\Gamma(\a)}\int_a^t \left(\ln\frac{t}{\t}\right)^{\a-1}x(\t) \frac{d\t}{\t}.$$
Fractional derivatives are defined using the fractional integral operators. Beginning with the Riemann--Liouville fractional integral, we find the most important definitions for fractional derivatives. Let $\a>0$ and $n\in\mathbb N$ be such that $\a\in(n-1,n)$. The Riemann--Liouville fractional derivative of order $\a$ of a function $x$ is defined as
$${D_{a+}^\a}x(t)=\left(\frac{d}{dt}\right)^n{I_{a+}^{n-\a}}x(t)=\frac{1}{\Gamma(n-\a)}\left(\frac{d}{dt}\right)^n\int_a^t (t-\t)^{n-\a-1}x(\t) d\t,$$
while the Caputo fractional derivative is defined as
$${^CD_{a+}^\a}x(t)={I_{a+}^{n-\a}}\left(\frac{d}{dt}\right)^n x(t)=\frac{1}{\Gamma(n-\a)}\int_a^t (t-\t)^{n-\a-1}\left(\frac{d}{d\t}\right)^nx(\t) d\t.$$
For what concerns the Hadamard fractional derivative, we have
$${^H D_{a+}^\a}x(t)=\left(t\frac{d}{dt}\right)^n{^HI_{a+}^{n-\a}}x(t)=\frac{1}{\Gamma(n-\a)}\left(t\frac{d}{dt}\right)^n\int_a^t
\left(\ln\frac{t}{\t}\right)^{n-\a-1}x(\t)  \frac{d\t}{\t}.$$
For more on the subject, we advice the reader to \cite{Kilbas,Samko}.
Finally, in \cite{Baleanu1,Baleanu2}, the Caputo--Hadamard fractional derivative is presented and some properties studied, and the definition is
$${^{CH}D_{a+}^\a}x(t)={^HI_{a+}^{n-\a}} \left(t\frac{d}{dt}\right)^nx(t)=\frac{1}{\Gamma(n-\a)}\int_a^t
\left(\ln\frac{t}{\t}\right)^{n-\a-1}\left(\t\frac{d}{d\t}\right)^nx(\t) \frac{d\t}{\t}.$$
The previous notions can be generalized by introducing a new parameter in the definitions, and for some particular cases we recover the classical ones.
In \cite{Katugampola1}, starting with the formula
$$\int_a^t \t_1^{\r-1}\, d\t_1\,\int_a^{\t_1} \t_2^{\r-1}\, d\t_2\,\ldots \int_a^{\t_{n-1}} \t_n^{\r-1}x(\t_n)\, d\t_n
=\frac{\r^{1-n}}{(n-1)!}\int_a^t \t^{\r-1}(t^\r-\t^\r)^{n-1}x(\t) d\t,$$
Katugampola suggest a new type of fractional integral, which includes the Riemann--Liouville type by considering $\r=1$ and the Hadamard integral when $\r\to0^+$.

\begin{definition} Let $a,b>0$ be two reals, and $x:[a,b]\rightarrow\mathbb{R}$ be an integrable function. The left-sided and right-sided Katugampola fractional integrals of order $\a>0$ and parameter $\r>0$ are defined respectively by
$$\LI x(t)=\frac{\r^{1-\a}}{\Gamma(\a)}\int_a^t \t^{\r-1}(t^\r-\t^\r)^{\a-1}x(\t) d\t$$
and
$$\RI x(t)=\frac{\r^{1-\a}}{\Gamma(\a)}\int_t^b \t^{\r-1}(\t^\r-t^\r)^{\a-1}x(\t) d\t.$$
\end{definition}

Also, in \cite{Katugampola2}, a differential operator of order $\a>0$ with dependence on a parameter $\r>0$ is defined as
$$\LDRL x(t)=\left(t^{1-\r}\frac{d}{dt}\right)^n{\mathcal{I}_{a+}^{n-\a,\r}}x(t)=
\frac{\r^{1-n+\a}}{\Gamma(n-\alpha)}\left(t^{1-\r}\frac{d}{dt}\right)^n\int_a^t \t^{\r-1}(t^\r-\t^\r)^{n-\a-1}x(\t) d\tau,$$
for the left-sided fractional derivative, and for the right-sided fractional derivative we have
$$\RDRL x(t)=\left(-t^{1-\r}\frac{d}{dt}\right)^n{\mathcal{I}_{b-}^{n-\a,\r}}x(t)=
\frac{\r^{1-n+\a}}{\Gamma(n-\alpha)}\left(-t^{1-\r}\frac{d}{dt}\right)^n\int_t^b \t^{\r-1}(\t^\r-t^\r)^{n-\a-1}x(\t) d\tau.$$

\section{Caputo--Katugampola fractional derivative}\label{sec:CK}

Having in mind the different definitions for fractional operators, a notion of Caputo--Katugampola fractional derivative is immediate.

\begin{definition}\label{CKFD}
Let $0< a<b<\infty$ be two reals, $\r$ be a positive real number, $\a\in\mathbb R^+$ and $n\in\mathbb N$ be such that $\a\in(n-1,n)$, and $x:[a,b]\rightarrow\mathbb{R}$ a function of class $C^n$. The left-sided and right-sided Caputo--Katugampola fractional derivatives of order $\a$ and parameter $\r$ are defined respectively by
$$\LD x(t)= {\mathcal{I}_{a+}^{n-\a,\r}}\left(t^{1-\r}\frac{d}{dt}\right)^nx(t)
=\frac{\r^{1-n+\a}}{\Gamma(n-\alpha)}\int_a^t \t^{\r-1}(t^\r-\t^\r)^{n-\a-1}\left(\t^{1-\r}\frac{d}{d\t}\right)^nx(\t) d\tau$$
and
$$\RD x(t)= {\mathcal{I}_{b-}^{n-\a,\r}}\left(-t^{1-\r}\frac{d}{dt}\right)^nx(t)
=\frac{\r^{1-n+\a}}{\Gamma(n-\alpha)}\int_t^b \t^{\r-1}(\t^\r-t^\r)^{n-\a-1}\left(-\t^{1-\r}\frac{d}{d\t}\right)^nx(\t) d\tau.$$
\end{definition}

We refer to \cite{Almeida} for a detailed study when $\a\in(0,1)$. Also, since the left-sided and right-sided Katugampola fractional integrals are bounded linear operators, it is clear that the left-sided and right-sided Caputo--Katugampola fractional derivatives are continuous operators on the closed interval $[a,b]$.
From the definition, it is obvious that the fractional derivative of a constant is zero.

In order to simplify the writing, we introduce the notation
$$x_{(n)}(t):=\left(t^{1-\r}\frac{d}{dt}\right)^nx(t).$$
Let $C^n[a,b]$ be the set of functions $x$ such that $x^{(n)}$ exists and is continuous on $[a,b]$. We define on $C^n[a,b]$ the norms
$$\|x\|^\r_{C^n}=\sum_{k=0}^n\max_{t\in[a,b]}|x^{(n)}(t)| \quad \mbox{and} \quad \|x\|_{C}=\max_{t\in[a,b]}|x(t)|.$$

\begin{theorem} The following relations hold:
$$\lim_{\a\to n^-}\LD x(t)=x_{(n)}(t), \quad \lim_{\a\to (n-1)^+}\LD x(t)=x_{(n-1)}(t)-x_{(n-1)}(a),$$
$$\lim_{\a\to n^-}\RD x(t)=(-1)^nx_{(n)}(t), \quad \lim_{\a\to (n-1)^+}\RD x(t)=(-1)^n(x_{(n-1)}(b)-x_{(n-1)}(t)),$$
\end{theorem}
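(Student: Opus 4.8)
The plan is to introduce $\beta:=n-\a$ and rewrite the operator, using the notation $x_{(n)}$, as
$$\LD x(t)=\frac{\r^{1-\beta}}{\Gamma(\beta)}\int_a^t \t^{\r-1}(t^\r-\t^\r)^{\beta-1}x_{(n)}(\t)\,d\t,$$
so that the two one-sided limits become the limits $\beta\to1^-$ and $\beta\to0^+$ of this single expression. The two regimes behave very differently, and I would treat them separately. Recall that $a>0$, so $\t^{\r-1}$ stays bounded on $[a,t]$ and $x_{(n)}$ is continuous there.

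For $\a\to(n-1)^+$, i.e. $\beta\to1^-$, I would pass to the limit under the integral sign. The factor $(t^\r-\t^\r)^{\beta-1}$ has an integrable singularity at $\t=t$ and, for $\beta$ bounded away from $0$, is dominated by $\max\{(t^\r-\t^\r)^{\beta_0-1},1\}$ with some fixed $\beta_0\in(0,1)$, which is integrable; hence dominated convergence applies. Since $\r^{1-\beta}\to1$, $\Gamma(\beta)\to\Gamma(1)=1$, and $(t^\r-\t^\r)^{\beta-1}\to1$ pointwise, the limit equals $\int_a^t \t^{\r-1}x_{(n)}(\t)\,d\t$. The key observation is that $x_{(n)}(\t)=\t^{1-\r}\frac{d}{d\t}x_{(n-1)}(\t)$, so $\t^{\r-1}x_{(n)}(\t)=\frac{d}{d\t}x_{(n-1)}(\t)$, and the fundamental theorem of calculus (valid since $x_{(n-1)}\in C^1$) yields $x_{(n-1)}(t)-x_{(n-1)}(a)$, as claimed.

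The case $\a\to n^-$, i.e. $\beta\to0^+$, is the main obstacle, because $\Gamma(\beta)\to\infty$ while the kernel concentrates at $\t=t$, so a direct interchange of limit and integral is unavailable. Here I would first integrate by parts using
$$\t^{\r-1}(t^\r-\t^\r)^{\beta-1}=-\frac{1}{\beta\r}\frac{d}{d\t}(t^\r-\t^\r)^{\beta}.$$
The boundary term at $\t=t$ vanishes (since $\beta>0$) and at $\t=a$ produces $(t^\r-a^\r)^{\beta}x_{(n)}(a)$; combining this with the identity $\Gamma(\beta+1)=\beta\Gamma(\beta)$ gives
$$\LD x(t)=\frac{\r^{-\beta}}{\Gamma(\beta+1)}\left[(t^\r-a^\r)^{\beta}x_{(n)}(a)+\int_a^t (t^\r-\t^\r)^{\beta}\frac{d}{d\t}x_{(n)}(\t)\,d\t\right].$$
Now the integrand is no longer singular, so letting $\beta\to0^+$ is routine: $\r^{-\beta}\to1$, $\Gamma(\beta+1)\to1$, $(t^\r-a^\r)^{\beta}\to1$, and $(t^\r-\t^\r)^{\beta}\to1$ boundedly, whence by dominated convergence the bracket tends to $x_{(n)}(a)+\big(x_{(n)}(t)-x_{(n)}(a)\big)=x_{(n)}(t)$. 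This integration by parts tacitly uses that $x_{(n)}$ is differentiable (so that $x$ is slightly more regular than $C^n$); to remain within $C^n$ one may instead view $k_\beta(\t)=\frac{\r^{1-\beta}}{\Gamma(\beta)}\t^{\r-1}(t^\r-\t^\r)^{\beta-1}$ as an approximate identity, noting $\int_a^t k_\beta(\t)\,d\t=\r^{-\beta}(t^\r-a^\r)^{\beta}/\Gamma(\beta+1)\to1$, and then deduce from the continuity of $x_{(n)}$ at $t$ that $\int_a^t k_\beta(\t)[x_{(n)}(\t)-x_{(n)}(t)]\,d\t\to0$.

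Finally, the two right-sided limits follow from the identical computation applied to $\RD$. The only changes are that $t^{1-\r}\frac{d}{dt}$ is replaced by $-t^{1-\r}\frac{d}{dt}$, so that the $n$-fold operator contributes a factor $(-1)^n$, and that the integration is over $[t,b]$; integrating by parts then puts the nonvanishing boundary term at $\t=b$, which interchanges the roles of the endpoints and produces $(-1)^n x_{(n)}(t)$ in the first limit and $(-1)^n\big(x_{(n-1)}(b)-x_{(n-1)}(t)\big)$ in the second.
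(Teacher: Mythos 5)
Your proposal is correct and follows essentially the same route as the paper: integration by parts (producing the boundary term $(t^\r-a^\r)^{n-\a}x_{(n)}(a)$ and the nonsingular kernel $(t^\r-\t^\r)^{n-\a}$) for the limit $\a\to n^-$, and direct passage to the limit in the definition for $\a\to(n-1)^+$, with the right-sided cases handled symmetrically. The only difference is that you justify the interchanges (dominated convergence) and flag the extra regularity tacitly used in the integration by parts, offering an approximate-identity fix -- points the paper's terser proof passes over in silence.
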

\begin{proof} Integrating by parts, we deduce
$$\begin{array}{ll}
\DS\LD x(t)&=\DS\frac{\r^{1-n+\a}}{\Gamma(n-\alpha)}\int_a^t \t^{\r-1}(t^\r-\t^\r)^{n-\a-1}x_{(n)}(\t) d\tau\\
&=\DS\frac{\r^{-n+\a}}{\Gamma(n+1-\alpha)}(t^\r-a^\r)^{n-\a}x_{(n)}(a)+\frac{\r^{-n+\a}}{\Gamma(n+1-\alpha)}\int_a^t (t^\r-\t^\r)^{n-\a}
\frac{d}{d\t}x_{(n)}(\t) d\tau.\\
\end{array}$$
Thus,
$$\lim_{\a\to n^-}\LD x(t)=x_{(n)}(a)+[x_{(n)}(t)-x_{(n)}(a)]=x_{(n)}(t).$$
For the second formula, starting with the definition, we obtain directly that
$$\begin{array}{ll}
\DS \lim_{\a\to (n-1)^+}\LD x(t)&=\DS \int_a^t \frac{d}{d\t}x_{(n-1)}(\t) d\tau\\
&=\DS  x_{(n-1)}(t)-x_{(n-1)}(a).
\end{array}$$
The other two formulas are proven in a similar way.
\end{proof}

The following result is easily proven, and we omit the proof here.

\begin{theorem}\label{teo:bound} Given a function $x\in C^n[a,b]$ and $t\in[a,b]$, we have
$$|\LD x(t)|\leq \frac{\r^{\a-n}}{\Gamma(n+1-\a)}\max_{\t\in[a,t]}|x_{(n)}(\t)|(t^\r-a^\r)^{n-\a}$$
and
$$|\RD x(t)|\leq \frac{\r^{\a-n}}{\Gamma(n+1-\a)}\max_{\t\in[t,b]}|x_{(n)}(\t)|(b^\r-t^\r)^{n-\a}.$$
In particular, $\LD x(a)=0$ and $\RD x(b)=0$.
\end{theorem}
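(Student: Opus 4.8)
The plan is to bound the integral in the definition of $\LD x(t)$ directly. Since $x\in C^n[a,b]$, the transformed derivative $x_{(n)}$ is continuous on $[a,b]$, so its maximum over $[a,t]$ is attained and finite. I would take absolute values inside the integral, replace $|x_{(n)}(\t)|$ by $\max_{\t\in[a,t]}|x_{(n)}(\t)|$, and use that the kernel $\t^{\r-1}(t^\r-\t^\r)^{n-\a-1}$ is nonnegative on $(a,t)$. This reduces the estimate to computing the elementary integral $\int_a^t \t^{\r-1}(t^\r-\t^\r)^{n-\a-1}\,d\t$.

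The key step is evaluating this integral in closed form. I would substitute $u=\t^\r$, so that $\t^{\r-1}\,d\t=\r^{-1}\,du$ and the limits become $a^\r$ and $t^\r$; this gives $\r^{-1}\int_{a^\r}^{t^\r}(t^\r-u)^{n-\a-1}\,du$. A second substitution $v=t^\r-u$ turns this into $\r^{-1}\int_0^{t^\r-a^\r}v^{n-\a-1}\,dv$. Because $\a\in(n-1,n)$ we have $n-\a-1\in(-1,0)$, so the integrand has only an integrable singularity at the upper endpoint and the integral converges to $\r^{-1}(n-\a)^{-1}(t^\r-a^\r)^{n-\a}$.

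Finally I would reassemble the constants: multiplying by the prefactor $\r^{1-n+\a}/\Gamma(n-\a)$ and using $\r^{1-n+\a}\cdot\r^{-1}=\r^{\a-n}$ together with $(n-\a)\Gamma(n-\a)=\Gamma(n+1-\a)$ yields exactly the claimed bound. For the special value $\LD x(a)=0$, I would note that setting $t=a$ makes the factor $(t^\r-a^\r)^{n-\a}$ vanish, since the exponent $n-\a$ is strictly positive. The right-sided estimate follows by the same argument applied to $\RD x(t)$, using the substitution $u=\t^\r$ on $[t,b]$ together with the kernel $(\t^\r-t^\r)^{n-\a-1}$, and $\RD x(b)=0$ for the same reason. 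Since every step is a routine substitution, I do not anticipate any genuine obstacle; the only point requiring care is confirming convergence of the integral at the endpoint, which is guaranteed by $n-\a-1>-1$.
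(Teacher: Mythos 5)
Your proof is correct and complete; the paper itself omits the proof of this theorem as ``easily proven,'' and your argument (taking absolute values inside the integral, evaluating $\int_a^t \t^{\r-1}(t^\r-\t^\r)^{n-\a-1}\,d\t = \frac{(t^\r-a^\r)^{n-\a}}{\r(n-\a)}$ via the substitution $u=\t^\r$, and absorbing the constant with $(n-\a)\Gamma(n-\a)=\Gamma(n+1-\a)$) is precisely the routine computation the author had in mind. One trivial slip in wording: after your substitution $v=t^\r-u$, the integrable singularity of $v^{n-\a-1}$ sits at the \emph{lower} endpoint $v=0$ (corresponding to $\t=t$ in the original integral), not the upper one, but this does not affect the argument.
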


\begin{theorem} The fractional derivatives $\LD$ and $\RD$ are bounded operators from $C^n[a,b]$ to $C[a,b]$, with
$$\|\LD x\|_C\leq K \|x\|^\r_{C^n} \quad \mbox{and} \quad \|\RD x\|_C\leq K \|x\|^\r_{C^n},$$
where
$$K= \frac{\r^{\a-n}}{\Gamma(n+1-\a)}(b^\r-a^\r)^{n-\a}.$$
\end{theorem}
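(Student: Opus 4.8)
The plan is to derive both inequalities directly from the pointwise bounds already established in Theorem \ref{teo:bound}, converting them into uniform bounds over $[a,b]$ by elementary monotonicity estimates. Since $\|\LD x\|_C=\max_{t\in[a,b]}|\LD x(t)|$, it suffices to bound $|\LD x(t)|$ by $K\,\|x\|^\r_{C^n}$ for each fixed $t\in[a,b]$ and then pass to the maximum; no integration or new estimate of the kernel is needed beyond what Theorem \ref{teo:bound} already supplies.

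Concretely, I would start from the bound of Theorem \ref{teo:bound},
$$|\LD x(t)|\leq \frac{\r^{\a-n}}{\Gamma(n+1-\a)}\max_{\t\in[a,t]}|x_{(n)}(\t)|\,(t^\r-a^\r)^{n-\a},$$
and control the two $t$-dependent factors separately. For the first factor, enlarging the interval of maximization gives $\max_{\t\in[a,t]}|x_{(n)}(\t)|\leq \max_{\t\in[a,b]}|x_{(n)}(\t)|$, and this last quantity is precisely the top-order term appearing in the definition of $\|x\|^\r_{C^n}$, hence is dominated by $\|x\|^\r_{C^n}$. For the second factor, since $\r>0$ the map $t\mapsto t^\r$ is increasing on $[a,b]$, so $0\leq t^\r-a^\r\leq b^\r-a^\r$; and because $\a\in(n-1,n)$ we have $n-\a\in(0,1)$, so the exponent is positive and raising to the power $n-\a$ preserves the inequality, yielding $(t^\r-a^\r)^{n-\a}\leq (b^\r-a^\r)^{n-\a}$. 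Multiplying these two estimates gives $|\LD x(t)|\leq K\,\|x\|^\r_{C^n}$ for every $t$, and taking the maximum over $t\in[a,b]$ produces $\|\LD x\|_C\leq K\,\|x\|^\r_{C^n}$.

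The right-sided case is treated identically, starting from the corresponding estimate in Theorem \ref{teo:bound}; the only modification is that one bounds $b^\r-t^\r\leq b^\r-a^\r$, again using $t^\r\geq a^\r$, which reproduces the same constant $K$. I do not expect any genuine obstacle here: the entire analytic content is absorbed into Theorem \ref{teo:bound}, and what remains is purely the monotonicity of $t\mapsto t^\r$ together with the observation that $\max_{t\in[a,b]}|x_{(n)}(t)|$ is dominated by the norm. The only point requiring a moment's care is matching the notation $x_{(n)}$ appearing in Theorem \ref{teo:bound} to the top-order term in the definition of $\|x\|^\r_{C^n}$, after which both estimates follow at once.
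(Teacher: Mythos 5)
Your proposal is correct and follows exactly the paper's own route: the paper's proof likewise combines the pointwise estimate of Theorem \ref{teo:bound} with the observation that $|x_{(n)}(t)|\leq\|x\|^\r_{C^n}$, merely stating it more tersely. Your write-up just makes explicit the monotonicity step $(t^\r-a^\r)^{n-\a}\leq(b^\r-a^\r)^{n-\a}$ that the paper leaves implicit.
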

\begin{proof} Given $t\in[a,b]$ and  $x\in C^n[a,b]$, using the fact that $|x_{(n)}(t)|\leq \|x\|^\r_{C^n}$ and Theorem \ref{teo:bound}, the result follows.
\end{proof}

\begin{lemma} Consider the functions $x,y:[a,b]\to\mathbb R$ given by
$$x(t)=(t^\r-a^\r)^v, \quad y(t)=(b^\r-t^\r)^v, \quad \mbox{with} \, v>n-1.$$
Then
$$\LD x(t)=\frac{\r^{\a}\Gamma(v+1)}{\Gamma(v-\a+1)}(t^\r-a^\r)^{v-\a}\quad \mbox{and} \quad \RD y(t)=\frac{\r^{\a}\Gamma(v+1)}{\Gamma(v-\a+1)}(b^\r-t^\r)^{v-\a}.$$
\end{lemma}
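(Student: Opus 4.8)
The plan is to reduce everything to a single change of variable that trivializes the Katugampola differential operator. The crucial observation is that under the substitution $u=t^\r$ one has $t^{1-\r}\frac{d}{dt}=\r\frac{d}{du}$, so that the $n$-fold operator defining $x_{(n)}$ collapses to $\r^n\frac{d^n}{du^n}$ acting on a power of $u-a^\r$. This is the only genuinely structural step; once it is made, the rest is a standard Beta-integral evaluation.

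First I would compute $x_{(n)}(t)$ for $x(t)=(t^\r-a^\r)^v$. Writing $x$ as $(u-a^\r)^v$ with $u=t^\r$ and applying $\left(\r\frac{d}{du}\right)^n$, direct differentiation gives
$$x_{(n)}(t)=\r^n\frac{\Gamma(v+1)}{\Gamma(v-n+1)}(t^\r-a^\r)^{v-n},$$
where the product $v(v-1)\cdots(v-n+1)$ has been rewritten via the Gamma function. This is legitimate because $v>n-1$ ensures $x$ is $n$ times differentiable on $[a,b]$.

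Next I would substitute this into Definition~\ref{CKFD}. Collecting the constants leaves
$$\LD x(t)=\frac{\r^{1+\a}\Gamma(v+1)}{\Gamma(n-\a)\Gamma(v-n+1)}\int_a^t \t^{\r-1}(t^\r-\t^\r)^{n-\a-1}(\t^\r-a^\r)^{v-n}\,d\t.$$
Performing the change of variable $s=\t^\r$ (so $\t^{\r-1}\,d\t=\tfrac{1}{\r}\,ds$) followed by the affine substitution $s=a^\r+(t^\r-a^\r)w$, the powers combine into an overall factor $(t^\r-a^\r)^{v-\a}$ multiplied by $\frac{1}{\r}\int_0^1 w^{v-n}(1-w)^{n-\a-1}\,dw=\frac{1}{\r}B(v-n+1,n-\a)$. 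Applying $B(p,q)=\Gamma(p)\Gamma(q)/\Gamma(p+q)$ and cancelling the Gamma factors against the constants yields exactly $\frac{\r^{\a}\Gamma(v+1)}{\Gamma(v-\a+1)}(t^\r-a^\r)^{v-\a}$.

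The right-sided identity is entirely analogous: under $u=t^\r$ the operator $-t^{1-\r}\frac{d}{dt}$ becomes $-\r\frac{d}{du}$, which produces $y_{(n)}(t)=\r^n\frac{\Gamma(v+1)}{\Gamma(v-n+1)}(b^\r-t^\r)^{v-n}$, and the same Beta-integral computation over $[t,b]$ gives the stated formula for $\RD y$. The step to treat with care is the convergence and exact evaluation of the Beta integral: the hypothesis $v>n-1$ is precisely what guarantees $v-n>-1$, so that $\int_0^1 w^{v-n}(1-w)^{n-\a-1}\,dw$ converges, while $n-\a>0$ holds because $\a\in(n-1,n)$. It is exactly these two Gamma factors that cancel the prefactor and leave the clean closed form.
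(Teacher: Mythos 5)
Your proposal is correct and follows essentially the same route as the paper's proof: compute $x_{(n)}(t)=\frac{\r^n\Gamma(v+1)}{\Gamma(v-n+1)}(t^\r-a^\r)^{v-n}$, substitute into Definition~\ref{CKFD}, and reduce the resulting integral by the substitution $w=(\t^\r-a^\r)/(t^\r-a^\r)$ (which is exactly your two-step change of variables combined) to the Beta integral $B(n-\a,v-n+1)$, finishing with $B(p,q)=\Gamma(p)\Gamma(q)/\Gamma(p+q)$. The only cosmetic difference is that you make explicit, via $u=t^\r$, the computation of $x_{(n)}$ that the paper dismisses as easy, and you spell out the convergence conditions $v>n-1$ and $\a<n$.
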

\begin{proof} We prove only the first one. It is easy to conclude that
$$x_{(n)}(t)=\frac{\r^n\Gamma(v+1)}{\Gamma(v-n+1)}(t^\r-a^\r)^{v-n}.$$
Then,
$$\LD x(t)=\frac{\r^{1+\a}\Gamma(v+1)}{\Gamma(n-\a)\Gamma(v-n+1)}(t^\r-a^\r)^{n-1-\a}
\int_a^t\t^{\r-1}\left(1-\frac{\t^\r-a^\r}{t^\r-a^\r}\right)^{n-1-\a}(\t^\r-a^\r)^{v-n}d\t.$$
With the change of variables
$u=(\t^\r-a^\r)/(t^\r-a^\r)$ and with the help of the Beta function
$$B(x,y)=\int_0^1u^{x-1}(1-u)^{y-1}du, \quad x,y>0,$$
we obtain
$$\LD x(t)=\frac{\r^{\a}\Gamma(v+1)}{\Gamma(n-\a)\Gamma(v-n+1)}(t^\r-a^\r)^{v-\a}B(n-\a,v-n+1).$$
Using the useful property
$$B(x,y)=\frac{\Gamma(x)\Gamma(y)}{\Gamma(x+y)},$$
we prove the formula
$$\LD (t^\r-a^\r)^v=\frac{\r^{\a}\Gamma(v+1)}{\Gamma(v-\a+1)}(t^\r-a^\r)^{v-\a}.$$
\end{proof}

Using these relations, we deduce the fractional derivative of the Mittag--Leffler function
$$E_\a(t)=\sum_{k=0}^\infty\frac{t^k}{\Gamma(\a k+1)}, \, t\in\mathbb R.$$
For all $\lambda \in\mathbb R$, we have
$$\begin{array}{ll}
\DS\LD E_\a(\lambda(t^\r-a^\r)^\a)&\DS=\sum_{k=0}^\infty\frac{\lambda^k}{\Gamma(\a k+1)}\LD (t^\r-a^\r)^{\a k}
=\sum_{k=1}^\infty\frac{\lambda^k}{\Gamma(\a k+1)}\LD (t^\r-a^\r)^{\a k}\\
&\DS=\sum_{k=1}^\infty\frac{\lambda^k}{\Gamma(\a k+1)}\frac{\r^\a\Gamma(\a k+1)}{\Gamma(\a k+1-\a)} (t^\r-a^\r)^{\a k-\a}=\lambda \r^\a E_\a(\lambda(t^\r-a^\r)^\a)\end{array}$$
and
$$\RD E_\a(\lambda(b^\r-t^\r)^\a)=\lambda \r^\a E_\a(\lambda(b^\r-t^\r)^\a).$$

The next two results justify our Definition \ref{CKFD}, since the Caputo--Katugampola fractional derivative is an inverse operation of the Katugampola fractional integral.

\begin{theorem}\label{thm:DerInt} Given a function $x\in C^n[a,b]$, we have
\begin{equation}\label{DerInt}\LI\LD x(t)=x(t)-\sum_{k=0}^{n-1}\frac{\r^{-k}}{k!}(t^\r-a^\r)^kx_{(k)}(a)\end{equation}
and
$$\RI\RD x(t)=x(t)-\sum_{k=0}^{n-1}\frac{\r^{-k}(-1)^k}{k!}(b^\r-t^\r)^kx_{(k)}(b).$$
\end{theorem}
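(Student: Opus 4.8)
The plan is to exploit the defining factorization $\LD x=\mathcal{I}_{a+}^{n-\a,\r}x_{(n)}$ together with the index (semigroup) law for the Katugampola integral. From Definition \ref{CKFD} the inner operator $x_{(n)}=(t^{1-\r}\frac{d}{dt})^nx$ is applied first and then integrated $n-\a$ times, so composing with $\LI$ should, by the property $\mathcal{I}_{a+}^{\a,\r}\mathcal{I}_{a+}^{\beta,\r}=\mathcal{I}_{a+}^{\a+\beta,\r}$, collapse to $\mathcal{I}_{a+}^{n,\r}x_{(n)}$. The whole problem then reduces to the integer-order weighted Taylor identity
$$\mathcal{I}_{a+}^{n,\r}x_{(n)}(t)=x(t)-\sum_{k=0}^{n-1}\frac{\r^{-k}}{k!}(t^\r-a^\r)^kx_{(k)}(a).$$

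First I would record the semigroup law. The cleanest route is the substitution $s=t^\r$, $\sigma=\t^\r$: under it $\t^{\r-1}d\t=\r^{-1}d\sigma$ and $t^{1-\r}\frac{d}{dt}=\r\frac{d}{ds}$, so $\LI$ becomes $\r^{-\a}$ times the classical Riemann--Liouville integral $I_{a^\r+}^{\a}$ acting on $\tilde x(\sigma)=x(\sigma^{1/\r})$ in the variable $s$. The composition law for $\LI$ then follows from the well-known Riemann--Liouville one (equivalently one may cite \cite{Katugampola1,Katugampola2}); applying it with exponents $\a$ and $n-\a$ gives $\LI\LD x=\mathcal{I}_{a+}^{n,\r}x_{(n)}$, and with exponents $n-1$ and $1$ gives the factorization $\mathcal{I}_{a+}^{n,\r}=\mathcal{I}_{a+}^{n-1,\r}\mathcal{I}_{a+}^{1,\r}$ used below.

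Next I would prove the integer-order identity by induction on $n$. The base ingredient is the fundamental relation $\mathcal{I}_{a+}^{1,\r}(t^{1-\r}\frac{d}{dt})g(t)=\int_a^t\t^{\r-1}\t^{1-\r}g'(\t)\,d\t=g(t)-g(a)$, valid for any $g\in C^1[a,b]$ (this is exactly where the $C^n$ hypothesis is needed), together with the elementary computation $\mathcal{I}_{a+}^{m,\r}(1)(t)=\frac{\r^{-m}}{m!}(t^\r-a^\r)^m$, immediate from iterating $\mathcal{I}_{a+}^{1,\r}(t^\r-a^\r)^j=\frac{\r^{-1}}{j+1}(t^\r-a^\r)^{j+1}$. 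For the inductive step I would peel off one integration,
$$\mathcal{I}_{a+}^{n,\r}x_{(n)}=\mathcal{I}_{a+}^{n-1,\r}\big(\mathcal{I}_{a+}^{1,\r}(t^{1-\r}\tfrac{d}{dt})x_{(n-1)}\big)=\mathcal{I}_{a+}^{n-1,\r}\big(x_{(n-1)}-x_{(n-1)}(a)\big),$$
then apply the induction hypothesis to $\mathcal{I}_{a+}^{n-1,\r}x_{(n-1)}$ and the constant formula to $x_{(n-1)}(a)\,\mathcal{I}_{a+}^{n-1,\r}(1)$; collecting the two contributions produces exactly the missing $k=n-1$ summand and closes the induction.

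Finally, the right-sided identity follows from the same argument applied to $-t^{1-\r}\frac{d}{dt}$ and the right integral $\RI$, the only difference being the alternating sign $(-1)^k$ coming from the $n$ minus signs and the evaluation at $b$. The main obstacle is really the semigroup law $\mathcal{I}_{a+}^{\a,\r}\mathcal{I}_{a+}^{n-\a,\r}=\mathcal{I}_{a+}^{n,\r}$; once it is in hand (via the $s=t^\r$ reduction to Riemann--Liouville, or by a direct Fubini/Beta-function computation), the remaining steps are routine bookkeeping of Gamma and Beta constants.
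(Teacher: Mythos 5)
Your proposal is correct, and its first half coincides exactly with the paper's proof: both invoke the semigroup law for the Katugampola integral (Theorem 4.1 in \cite{Katugampola1}) to collapse $\LI\LD x=\LI{\mathcal{I}_{a+}^{n-\a,\r}}x_{(n)}$ into ${\mathcal{I}_{a+}^{n,\r}}x_{(n)}$. Where you diverge is in how the resulting integer-order identity is established. The paper writes ${\mathcal{I}_{a+}^{n,\r}}x_{(n)}(t)=\frac{\r^{1-n}}{(n-1)!}\int_a^t(t^\r-\t^\r)^{n-1}\frac{d}{d\t}x_{(n-1)}(\t)\,d\t$ and integrates by parts $n-1$ times, each step lowering the exponent of $(t^\r-\t^\r)$ by one and producing a single boundary term $-\frac{\r^{-k}}{k!}(t^\r-a^\r)^kx_{(k)}(a)$, until the fundamental theorem of calculus finishes the argument. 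You instead run an induction on $n$, re-using the semigroup law in its integer form ${\mathcal{I}_{a+}^{n,\r}}={\mathcal{I}_{a+}^{n-1,\r}}{\mathcal{I}_{a+}^{1,\r}}$ together with the relations ${\mathcal{I}_{a+}^{1,\r}}\bigl(t^{1-\r}\frac{d}{dt}\bigr)g=g-g(a)$ and ${\mathcal{I}_{a+}^{m,\r}}(1)=\frac{\r^{-m}}{m!}(t^\r-a^\r)^m$, so that the $k=n-1$ Taylor term appears as the image of the constant $x_{(n-1)}(a)$ rather than as an integration-by-parts boundary term. The two finishes are computationally equivalent, but yours avoids integration by parts entirely and trades it for the constant formula, which makes the bookkeeping a bit cleaner; it is also more self-contained, since you sketch a proof of the semigroup law via the substitution $s=t^\r$ (reducing $\LI$ to $\r^{-\a}$ times a Riemann--Liouville integral), whereas the paper simply cites it. Your treatment of the right-sided formula, tracking the sign $(-1)^k$ through $\bigl(-t^{1-\r}\frac{d}{dt}\bigr)^k$ and evaluating at $b$, likewise matches what the paper leaves to the reader with its ``proven in a similar way'' remark.
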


\begin{proof} Using Theorem 4.1 in \cite{Katugampola1}, we have
$$
\begin{array}{ll}
\DS\LI\LD x(t)&=\LI {\mathcal{I}_{a+}^{n-\a,\r}}x_{(n)}(t)={\mathcal{I}_{a+}^{n,\r}}x_{(n)}(t)\\
&\\
&\DS=\frac{\r^{1-n}}{(n-1)!}\int_a^t(t^\r-\t^\r)^{n-1}\frac{d}{d\t}x_{(n-1)}(\t)d\t.
\end{array}$$
Using integration by parts, we deduce
$$\LI\LD x(t)=\frac{\r^{2-n}}{(n-2)!}\int_a^t(t^\r-\t^\r)^{n-2}\frac{d}{d\t}x_{(n-2)}(\t)d\t-\frac{\r^{1-n}}{(n-1)!}(t^\r-a^\r)^{n-1}x_{(n-1)}(a).$$
Integrating again by parts, we have
$$\LI\LD x(t)=\frac{\r^{3-n}}{(n-3)!}\int_a^t(t^\r-\t^\r)^{n-3}\frac{d}{d\t}x_{(n-3)}(\t)d\t-\sum_{k=n-2}^{n-1}\frac{\r^{-k}}{k!}(t^\r-a^\r)^{k}x_{(k)}(a).$$
Repeating this procedure $n-3$ times, we arrive at
$$ \begin{array}{ll}
\DS\LI\LD x(t)&=\DS\int_a^t\frac{d}{d\t}x(\t)d\t-\sum_{k=1}^{n-1}\frac{\r^{-k}}{k!}(t^\r-a^\r)^{k}x_{(k)}(a)\\
&\\
&\DS=x(t)-\sum_{k=0}^{n-1}\frac{\r^{-k}}{k!}(t^\r-a^\r)^kx_{(k)}(a).
\end{array}$$
The second formula is proven is a similar way.
\end{proof}

Taking $\r=1$, formula \eqref{DerInt} reduces to the Caputo case (see e.g. Lemma 2.22 \cite{Kilbas}):
$${I_{a+}^\a}{^CD_{a+}^\a} x(t)=x(t)-\sum_{k=0}^{n-1}\frac{1}{k!}(t-a)^kx^{(k)}(a),$$
and as $\r\to0^+$, having in mind that $\lim_{\r\to0^+}(t^\r-a^\r)/\r=\ln(t/a)$,
we obtain Lemma 2.5 in \cite{Baleanu2}:
$${^H I_{a+}^\a}{^{CH}D_{a+}^\a} x(t)=x(t)-\sum_{k=0}^{n-1}\frac{1}{k!}\left(\ln\frac{t}{a}\right)^k  \left[\left(t\frac{d}{dt}\right)^kx(t)\right]_{t=a}.$$

\begin{theorem}\label{thm:IntDer} Given a function $x\in C^1[a,b]$, we have
$$\LD\LI x(t)=x(t) \quad \mbox{and} \quad \RD\RI x(t)=x(t).$$
\end{theorem}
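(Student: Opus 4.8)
The plan is to exploit the factorization $\LD y = {\mathcal{I}_{a+}^{n-\a,\r}}\,y_{(n)}$ built into Definition \ref{CKFD}, so that $\LD\LI x = {\mathcal{I}_{a+}^{n-\a,\r}}\bigl(\LI x\bigr)_{(n)}$. Everything then reduces to computing $(\LI x)_{(n)}=\bigl(t^{1-\r}\frac{d}{dt}\bigr)^n{\mathcal{I}_{a+}^{\a,\r}}x$ explicitly and re-integrating once through ${\mathcal{I}_{a+}^{n-\a,\r}}$. The two tools I would rely on are the semigroup law ${\mathcal{I}_{a+}^{\beta,\r}}{\mathcal{I}_{a+}^{\gamma,\r}}={\mathcal{I}_{a+}^{\beta+\gamma,\r}}$ (Theorem 4.1 in \cite{Katugampola1}, already invoked in the proof of Theorem \ref{thm:DerInt}) and a Beta-function evaluation of the same kind carried out in the Lemma preceding Theorem \ref{thm:DerInt}.

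First I would establish the order-lowering identity $\bigl(t^{1-\r}\frac{d}{dt}\bigr){\mathcal{I}_{a+}^{\beta,\r}}x={\mathcal{I}_{a+}^{\beta-1,\r}}x$, valid whenever $\beta>1$. This is immediate by differentiating under the integral sign: the boundary contribution carries the factor $(t^\r-t^\r)^{\beta-1}=0$ (here $\beta-1>0$), and differentiating the kernel reproduces the kernel of order $\beta-1$ once the factor $\r\,t^{\r-1}$ is absorbed against $t^{1-\r}$. Since $\a\in(n-1,n)$, applying this identity $n-1$ times lowers the order from $\a$ down to $\gamma:=\a-n+1\in(0,1)$, giving $\bigl(t^{1-\r}\frac{d}{dt}\bigr)^{n-1}{\mathcal{I}_{a+}^{\a,\r}}x={\mathcal{I}_{a+}^{\gamma,\r}}x$; at each intermediate stage the order still exceeds $1$, so mere continuity of $x$ suffices there.

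The final differentiation is the delicate one and, I expect, the main obstacle: it acts on a fractional integral of order $\gamma<1$, for which the boundary term no longer vanishes. Here I would integrate by parts once --- this is exactly where the hypothesis $x\in C^1$ is used --- to obtain $\bigl(t^{1-\r}\frac{d}{dt}\bigr){\mathcal{I}_{a+}^{\gamma,\r}}x=\frac{\r^{1-\gamma}}{\Gamma(\gamma)}x(a)(t^\r-a^\r)^{\gamma-1}+{\mathcal{I}_{a+}^{\gamma,\r}}x_{(1)}$, the surviving boundary term producing the singular power $(t^\r-a^\r)^{\gamma-1}$. Although this makes $(\LI x)_{(n)}$ blow up at $t=a$, the outer operator ${\mathcal{I}_{a+}^{n-\a,\r}}$ still converges on it (the combined exponents give precisely a convergent Beta integral), so the composition $\LD\LI x$ remains well defined.

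Finally I would apply ${\mathcal{I}_{a+}^{n-\a,\r}}$ and set $\mu:=n-\a=1-\gamma$, so that $\mu+\gamma=1$. The semigroup law turns the second term into ${\mathcal{I}_{a+}^{1,\r}}x_{(1)}=\int_a^t x'(\t)\,d\t=x(t)-x(a)$, while the first term is evaluated by the same substitution $u=(\t^\r-a^\r)/(t^\r-a^\r)$ and Beta-function identity as in the earlier Lemma; because $\mu+\gamma=1$ the power $(t^\r-a^\r)^{\mu+\gamma-1}$ collapses to $1$ and $B(\gamma,\mu)=\Gamma(\gamma)\Gamma(\mu)$, so this term reduces to exactly $x(a)$. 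Adding the two contributions yields $\LD\LI x=x(t)$. The identity $\RD\RI x=x$ follows by the symmetric computation, with $-t^{1-\r}\frac{d}{dt}$ replacing $t^{1-\r}\frac{d}{dt}$ and the right-sided kernels, so I would only remark that the argument is entirely analogous.
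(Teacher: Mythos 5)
Your proposal is correct and follows essentially the same route as the paper's proof: factor $\LD\LI x={\mathcal{I}_{a+}^{n-\a,\r}}\bigl(\LI x\bigr)_{(n)}$, lower the order $n-1$ times by differentiating under the integral sign, integrate by parts once (exactly where $x\in C^1$ is needed) to isolate the singular boundary term $x(a)(t^\r-a^\r)^{\a-n}$ plus an integral of $x'$, and evaluate both contributions via the Beta function so that they sum to $x(t)$. The only cosmetic difference is that you invoke Katugampola's semigroup law to collapse ${\mathcal{I}_{a+}^{n-\a,\r}}{\mathcal{I}_{a+}^{\a-n+1,\r}}x_{(1)}$ into $\int_a^t x'(\t)\,d\t$, whereas the paper carries out that same computation inline using Dirichlet's formula and the Beta integral.
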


\begin{proof} We prove the formula for the left-sided fractional operators only. By definition,
\begin{equation}\label{aux1}\LD\LI x(t)={\mathcal{I}_{a+}^{n-\a,\r}}y_{(n)}(t), \quad \mbox{with} \quad y_{(n)}(t)=\left(t^{1-\r}\frac{d}{dt}\right)^n\LI x(t).
\end{equation}
Computing directly, and since $\a\in(n-1,n)$, we get
$$ \begin{array}{ll}
\DS y_{(1)}(t)&=\DS t^{1-\r}\frac{d}{dt}\frac{\r^{1-\a}}{\Gamma(\a)}\int_a^t \t^{\r-1}(t^\r-\t^\r)^{\a-1}x(\t) d\t\\
&\\
&=\DS\frac{\r^{2-\a}}{\Gamma(\a-1)}\int_a^t \t^{\r-1}(t^\r-\t^\r)^{\a-2}x(\t) d\t.\\
\end{array}$$
Repeating the process, we arrive at the expression
$$ \begin{array}{ll}
\DS y_{(n-1)}(t)&=\DS\frac{\r^{n-\a-1}}{\Gamma(\a-n+1)}\int_a^t \r \t^{\r-1}(t^\r-\t^\r)^{\a-n}x(\t) d\t.\\
&\\
&=\DS\frac{\r^{n-\a-1}}{\Gamma(\a-n+2)}\left[(t^\r-a^\r)^{\a-n+1}x(a)+\int_a^t (t^\r-\t^\r)^{\a-n+1}\frac{d}{d\t}x(\t) d\t\right].\\
\end{array}$$
using integration by parts. Then, we finally arrive to
$$y_{(n)}(t)=t^{1-\r}\frac{d}{dt}y_{(n-1)}(t)=\frac{\r^{n-\a}}{\Gamma(\a-n+1)}\left[(t^\r-a^\r)^{\a-n}x(a)+\int_a^t (t^\r-\t^\r)^{\a-n}\frac{d}{d\t}x(\t) d\t\right].$$
Then, replacing this last expression into equation \eqref{aux1}, we obtain
$$\LD\LI x(t)=\frac{\r}{\Gamma(n-\a)\Gamma(\a-n+1)}\left[x(a)\int_a^t\t^{\r-1}(t^\r-\t^\r)^{n-\a-1}(\t^\r-a^\r)^{\a-n}d\t\right.$$
$$\left.+\int_a^t \int_a^\t \t^{\r-1}(t^\r-\t^\r)^{n-\a-1}(\t^\r-s^\r)^{\a-n}\frac{d}{ds}x(s) ds\,d\t\right].$$
With the change of variables $u=(\t^\r-a^\r)/(t^\t-a^\r)$, we get
$$\int_a^t\t^{\r-1}(t^\r-\t^\r)^{n-\a-1}(\t^\r-a^\r)^{\a-n}d\t
=\int_a^t\t^{\r-1}(t^\r-a^\r)^{n-\a-1}\left(1-\frac{\t^\r-a^\r}{t^\r-a^\r}\right)^{n-\a-1}(\t^\r-a^\r)^{\a-n}d\t$$
$$=\frac1\r\int_0^1(1-u)^{n-\a-1}u^{\a-n}du=\frac1\r B(n-\a,\a-n+1)=\frac{\Gamma(n-\a)\Gamma(\a-n+1)}{\r}.$$
In a similar way, and using the Dirichlet's formula, we get
$$\int_a^t \int_a^\t \t^{\r-1}(t^\r-\t^\r)^{n-\a-1}(\t^\r-s^\r)^{\a-n}\frac{d}{ds}x(s) ds\,d\t
=\int_a^t \int_\t^t s^{\r-1}(t^\r-s^\r)^{n-\a-1}(s^\r-\t^\r)^{\a-n}\frac{d}{d\t}x(\t) ds\,d\t$$
$$=\int_a^t\frac{d}{d\t}x(\t)\frac{\Gamma(n-\a)\Gamma(\a-n+1)}{\r}d\t=\frac{\Gamma(n-\a)\Gamma(\a-n+1)}{\r}(x(t)-x(a)).$$
Then,
$$\LD\LI x(t)=\frac{\r}{\Gamma(n-\a)\Gamma(\a-n+1)}\left[x(a)\frac{\Gamma(n-\a)\Gamma(\a-n+1)}{\r}\right.$$
$$\left.+\frac{\Gamma(n-\a)\Gamma(\a-n+1)}{\r}(x(t)-x(a))\right]=x(t).$$
\end{proof}

Again, for $\r=1$ and $\r\to0^+$, we recover the classical formulas as in Lemma 2.21 of \cite{Kilbas} and in Lemma 2.4 of \cite{Baleanu2},  respectively:
$${^CD_{a+}^\a}{I_{a+}^\a} x(t)={^{CH}D_{a+}^\a}{^H I_{a+}^\a} x(t)=x(t).$$
We now establish a relation between the Katugampola and the Caputo--Katugampola fractional derivatives.

\begin{theorem}\label{thm:DerR-C} Let $x\in C^n[a,b]$ be a function. Then
$$ \LD x(t)=\LDRL \left[x(t)-\sum_{k=0}^{n-1}\frac{\r^{-k}}{k!}(t^\r-a^\r)^{k}x_{(k)}(a)\right]$$
and
$$ \RD x(t)=\RDRL \left[x(t)-\sum_{k=0}^{n-1}\frac{\r^{-k}(-1)^k}{k!}(b^\r-t^\r)^{k}x_{(k)}(b)\right].$$
\end{theorem}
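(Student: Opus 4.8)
The plan is to reduce the identity to the statement that the Caputo--Katugampola and Katugampola derivatives coincide on functions whose iterated derivatives $x_{(k)}$ vanish at the left endpoint. Write
$$P(t)=\sum_{k=0}^{n-1}\frac{\r^{-k}}{k!}(t^\r-a^\r)^{k}x_{(k)}(a), \qquad g(t)=x(t)-P(t),$$
so that the right-hand side of the claimed formula is exactly $\LDRL g(t)$. The goal is therefore to prove that $\LD x(t)=\LDRL g(t)$, and I would split this into the two reductions $\LD x(t)=\LD g(t)$ and $\LD g(t)=\LDRL g(t)$.

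First I would record the action of the operator $t^{1-\r}\frac{d}{dt}$ on the basic powers, namely $\left(t^{1-\r}\frac{d}{dt}\right)(t^\r-a^\r)^j=\r j(t^\r-a^\r)^{j-1}$, which iterates to $\left(t^{1-\r}\frac{d}{dt}\right)^m(t^\r-a^\r)^j=\r^m \frac{j!}{(j-m)!}(t^\r-a^\r)^{j-m}$ for $m\le j$ and gives zero for $m>j$. Applying this termwise to $P$ shows that $P_{(m)}(a)=x_{(m)}(a)$ for $0\le m\le n-1$ (only the $k=m$ summand survives at $t=a$) and that $P_{(n)}\equiv 0$, since $P$ is a polynomial of degree $n-1$ in the variable $t^\r-a^\r$. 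Consequently $g_{(k)}(a)=0$ for $k=0,\dots,n-1$, and by linearity together with $\LD P(t)={\mathcal{I}_{a+}^{n-\a,\r}}P_{(n)}(t)=0$ we obtain the first reduction $\LD x(t)=\LD g(t)$.

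It then remains to prove $\LD g(t)=\LDRL g(t)$. Applying Theorem \ref{thm:DerInt} to $g$ and using that every boundary value $g_{(k)}(a)$ vanishes gives $\LI\LD g(t)=g(t)$. I would then apply $\LDRL$ to this identity: unfolding the definition $\LDRL=\left(t^{1-\r}\frac{d}{dt}\right)^n{\mathcal{I}_{a+}^{n-\a,\r}}$, the semigroup property of the Katugampola integral (Theorem 4.1 in \cite{Katugampola1}) collapses ${\mathcal{I}_{a+}^{n-\a,\r}}\LI={\mathcal{I}_{a+}^{n,\r}}$, and since $t^{1-\r}\frac{d}{dt}$ is a left inverse of ${\mathcal{I}_{a+}^{1,\r}}$ one gets $\left(t^{1-\r}\frac{d}{dt}\right)^n{\mathcal{I}_{a+}^{n,\r}}=\mathrm{id}$. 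Hence $\LDRL g(t)=\LDRL\LI\LD g(t)=\LD g(t)$. Chaining the two reductions yields $\LD x(t)=\LD g(t)=\LDRL g(t)=\LDRL\left[x(t)-P(t)\right]$, which is the first formula; the right-sided identity follows by the symmetric argument with $b$ in place of $a$ and the operator $-t^{1-\r}\frac{d}{dt}$.

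The main obstacle I anticipate is not a hard estimate but the bookkeeping around the order of the fractional integral: one must keep $n-\a\in(0,1)$ in mind so that ${\mathcal{I}_{a+}^{n-\a,\r}}$ is a genuine fractional integral, and must justify that $\left(t^{1-\r}\frac{d}{dt}\right)^n{\mathcal{I}_{a+}^{n,\r}}=\mathrm{id}$ rather than leaving a residual polynomial. This is precisely where the vanishing of the boundary data $g_{(k)}(a)$ is essential, via Theorem \ref{thm:DerInt}, to guarantee that no correction terms reappear when the $n$ derivatives are moved back across the integral.
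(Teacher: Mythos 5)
Your proof is correct, but it follows a genuinely different route from the paper's. The paper argues directly from the definition: it applies $\LDRL$ to $x(t)-\sum_{k=0}^{n-1}\frac{\r^{-k}}{k!}(t^\r-a^\r)^{k}x_{(k)}(a)$ and integrates by parts inside the integral $n-1$ times; at each step the bracketed integrand vanishes at $\t=a$ (the same fact as your $g_{(k)}(a)=0$, in disguise), so no boundary terms appear, and each integration by parts absorbs one outer factor $t^{1-\r}\frac{d}{dt}$, until exactly the defining integral of $\LD x$ remains. You instead argue operator-algebraically: the decomposition $x=g+P$, the annihilation $\LD P={\mathcal{I}_{a+}^{n-\a,\r}}P_{(n)}=0$, the inversion $\LI\LD g=g$ from Theorem \ref{thm:DerInt} (this is where the vanishing data $g_{(k)}(a)=0$ is really used), and the left-inverse identity $\LDRL\LI=\mathrm{id}$, which you correctly obtain from the semigroup property ${\mathcal{I}_{a+}^{n-\a,\r}}\LI={\mathcal{I}_{a+}^{n,\r}}$ together with the fundamental theorem of calculus. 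Both proofs rest on the same underlying ingredients (the semigroup property and integration by parts), but yours buys conceptual clarity and reuse --- it exhibits the theorem as the statement that $\LD$ and $\LDRL$ agree on functions whose generalized Taylor data at $a$ vanish, recycling Theorem \ref{thm:DerInt} rather than repeating its computation --- while the paper's buys self-containedness, needing nothing beyond the definition and elementary calculus. Two small points to tighten in your write-up: (i) the identity $\left(t^{1-\r}\frac{d}{dt}\right)^n{\mathcal{I}_{a+}^{n,\r}}h=h$ holds unconditionally for continuous $h$; residual polynomials arise only in the reverse composition ${\mathcal{I}_{a+}^{n,\r}}\left(t^{1-\r}\frac{d}{dt}\right)^n$, so the vanishing of $g_{(k)}(a)$ is not what rules them out there --- it is needed only to get $\LI\LD g=g$ on the nose; and (ii) you should note explicitly that $h=\LD g={\mathcal{I}_{a+}^{n-\a,\r}}g_{(n)}$ is continuous on $[a,b]$ (as remarked after Definition \ref{CKFD}), which is what licenses applying the fundamental theorem of calculus at each of the $n$ differentiations.
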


\begin{proof} Starting with the definition of the Katugampola fractional derivative, and integrating by parts, one deduces
$$\left(t^{1-\r}\frac{d}{dt}\right)^n\int_a^t \t^{\r-1}(t^\r-\t^\r)^{n-\a-1}
\left[x(\t)-\sum_{k=0}^{n-1}\frac{\r^{-k}}{k!}(\t^\r-a^\r)^{k}x_{(k)}(a)\right] d\tau$$
$$=\left(t^{1-\r}\frac{d}{dt}\right)^n  \int_a^t \frac{\t^{\r-1}(t^\r-\t^\r)^{n-\a}}{\r(n-\a)}
\left[x_{(1)}(\t)-\sum_{k=1}^{n-1}\frac{\r^{1-k}}{(k-1)!}(\t^\r-a^\r)^{k-1}x_{(k)}(a)\right] d\tau$$
$$=\left(t^{1-\r}\frac{d}{dt}\right)^{n-1}  \int_a^t\t^{\r-1}(t^\r-\t^\r)^{n-\a-1}
\left[x_{(1)}(\t)-\sum_{k=1}^{n-1}\frac{\r^{1-k}}{(k-1)!}(\t^\r-a^\r)^{k-1}x_{(k)}(a)\right] d\tau.$$
Repeating the process $n-2$ more times, we arrive at the equivalent expression
$$t^{1-\r}\frac{d}{dt}\int_a^t \frac{\t^{\r-1}(t^\r-\t^\r)^{n-\a}}{\r(n-\a)}x_{(n)}(\t) d\tau
=\int_a^t \t^{\r-1}(t^\r-\t^\r)^{n-\a-1}x_{(n)}(\t) d\tau,$$
proving the first formula. The second one is obtained in a similar way.
\end{proof}

Formula obtained in Theorem \ref{thm:DerR-C} allows us to deduce a direct relation between the two types of fractional derivative operators. In fact, similar as done before, we have
$$\LDRL (t^\r-a^\r)^{k}=\frac{\r^{-n+\a}k!}{\Gamma(n-\alpha+k+1)}\left(t^{1-\r}\frac{d}{dt}\right)^n(t^\r-a^\r)^{n-\a+k},$$
and since
$$\left(t^{1-\r}\frac{d}{dt}\right)^n(t^\r-a^\r)^{n-\a+k}=\r^n\frac{\Gamma(n-\a+k+1)}{\Gamma(k+1-\a)}(t^\r-a^\r)^{k-\a},$$
we get the following relation
$$ \LD x(t)=\LDRL x(t)-\sum_{k=0}^{n-1}\frac{\r^{\a-k}}{\Gamma(k+1-\a)}(t^\r-a^\r)^{k-\a}x_{(k)}(a).$$
Analogously, we obtain
$$ \RD x(t)=\RDRL x(t)-\sum_{k=0}^{n-1}\frac{\r^{\a-k}(-1)^k}{\Gamma(k+1-\a)}(b^\r-t^\r)^{k-\a}x_{(k)}(b).$$

The following result establishes an integration by parts formula, and generalizes the formula proven in \cite{Almeida2} for arbitrary real $\a>0$.

\begin{theorem} Let $x\in C[a,b]$ and $y\in C^n[a,b]$ be two functions. Then,
$$\int_a^b x(t) \, \LD y(t) \, dt=\int_a^b \RDRL(t^{1-\r}x(t))\, t^{\r-1}y(t) \, dt$$
$$+\left[\sum_{k=0}^{n-1}\left(-t^{1-\r}\frac{d}{dt}\right)^k  {\mathcal{I}_{b-}^{n-\a,\r}}(t^{1-\r}x(t))\, y_{(n-k-1)}(t)    \right]_{t=a}^{t=b},$$
and
$$\int_a^b x(t) \, \RD y(t) \, dt=\int_a^b \LDRL(t^{1-\r}x(t))\, t^{\r-1}y(t) \, dt
$$
$$+\left[\sum_{k=0}^{n-1}(-1)^{n-k}\left(t^{1-\r}\frac{d}{dt}\right)^k  {\mathcal{I}_{a+}^{n-\a,\r}}(t^{1-\r}x(t))\, y_{(n-k-1)}(t)    \right]_{t=a}^{t=b}.$$
\end{theorem}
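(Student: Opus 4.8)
The plan is to establish the first identity only; the second follows from the identical computation with the roles of $a$ and $b$ interchanged and the extra sign coming from the operator $-t^{1-\r}\frac{d}{dt}$ in the definition of $\RD$. First I would insert the definition
$$\LD y(t)=\frac{\r^{1-n+\a}}{\Gamma(n-\a)}\int_a^t \t^{\r-1}(t^\r-\t^\r)^{n-\a-1}y_{(n)}(\t)\,d\t$$
into the left-hand side and interchange the order of integration over the triangle $\{(t,\t):a\le\t\le t\le b\}$. After the swap the inner integral is $\int_\t^b x(t)(t^\r-\t^\r)^{n-\a-1}\,dt$, and on writing $x(t)=t^{\r-1}\big(t^{1-\r}x(t)\big)$ I recognise it as a right-sided Katugampola integral applied to $t^{1-\r}x(t)$; this is exactly where the factor $t^{1-\r}$ in the statement comes from. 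The normalising constants cancel, and I am left with
$$\int_a^b x(t)\,\LD y(t)\,dt=\int_a^b \t^{\r-1}y_{(n)}(\t)\,\Phi(\t)\,d\t,\qquad \Phi:={\mathcal{I}_{b-}^{n-\a,\r}}\big(t^{1-\r}x(t)\big).$$

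Next I would carry out $n$ successive integrations by parts. The algebraic engine is the observation that, with $\delta:=\t^{1-\r}\frac{d}{d\t}$, one has $y_{(k)}=\delta y_{(k-1)}$ and $\t^{\r-1}\delta g=\frac{d}{d\t}g$; thus each factor $\t^{\r-1}$ turns a $\delta$ into an ordinary derivative. Setting $I_j:=\int_a^b \t^{\r-1}y_{(n-j)}(\t)\,\delta^j\Phi(\t)\,d\t$, one integration by parts gives the recursion $I_j=\big[\,\delta^j\Phi\cdot y_{(n-j-1)}\,\big]_a^b-I_{j+1}$, where I use $(\delta^j\Phi)'=\t^{\r-1}\delta^{j+1}\Phi$. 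Telescoping from $I_0$ (the integral above) down to $I_n$ then yields
$$\int_a^b \t^{\r-1}y_{(n)}\,\Phi\,d\t=\sum_{j=0}^{n-1}(-1)^j\big[\delta^j\Phi\cdot y_{(n-j-1)}\big]_a^b+(-1)^n\int_a^b \t^{\r-1}y(\t)\,\delta^n\Phi(\t)\,d\t.$$

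It remains to identify the two pieces. Since $(-1)^j\delta^j=\big(-t^{1-\r}\frac{d}{dt}\big)^j$, the boundary sum is precisely the bracketed term in the statement. For the last integral, $\delta^n\Phi=(-1)^n\big(-t^{1-\r}\frac{d}{dt}\big)^n{\mathcal{I}_{b-}^{n-\a,\r}}\big(t^{1-\r}x(t)\big)=(-1)^n\,\RDRL\big(t^{1-\r}x(t)\big)$, so the prefactor $(-1)^n$ cancels and the term becomes $\int_a^b \RDRL(t^{1-\r}x(t))\,t^{\r-1}y(t)\,dt$, as claimed. The step I expect to be the main obstacle is not the bookkeeping but the regularity: one must justify Fubini for the weakly singular kernel $(t^\r-\t^\r)^{n-\a-1}$, which is harmless because $n-\a-1\in(-1,0)$ makes it integrable while $x$ and $y_{(n)}$ are bounded, and, more delicately, one must know that each $\delta^j\Phi$ ($j\le n$) is continuously differentiable so that the $n$ integrations by parts and the boundary evaluations are legitimate. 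The latter is exactly the tacit requirement that $\RDRL(t^{1-\r}x(t))$ exist, and a rigorous statement should either assume this or impose enough smoothness on $x$ to guarantee it.
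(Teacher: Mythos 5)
Your proposal is correct and follows essentially the same route as the paper: interchange the order of integration via Dirichlet's formula, recognize the inner integral as ${\mathcal{I}_{b-}^{n-\a,\r}}(t^{1-\r}x(t))$, and perform $n$ successive integrations by parts using the identity $\t^{\r-1}\left(\t^{1-\r}\frac{d}{d\t}\right)y_{(k-1)}=\frac{d}{d\t}y_{(k-1)}$; your recursion-and-telescoping bookkeeping and the explicit regularity remarks (Fubini for the weakly singular kernel, differentiability of the iterated integral) merely present more carefully what the paper does implicitly.
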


\begin{proof}
We prove only the first formula; the second one is similar. Applying the Dirichlet's formula and integrating by parts, we get

$$\begin{array}{ll}
\DS \int_a^b x(t) \, \LD y(t) \, dt & =\DS
\frac{\r^{1-n+\a}}{\Gamma(n-\alpha)}\int_a^b\int_a^t x(t)(t^\r-\t^\r)^{n-\a-1}\frac{d}{d\t}y_{(n-1)}(\t)\, d\tau\,dt\\
&\DS = \frac{\r^{1-n+\a}}{\Gamma(n-\alpha)}\int_a^b\int_t^b x(\t)(\t^\r-t^\r)^{n-\a-1}\, d\t\, \frac{d}{dt}y_{(n-1)}(t)\,dt\\
&\DS = \frac{\r^{1-n+\a}}{\Gamma(n-\alpha)}\left[\int_t^b x(\t)(\t^\r-t^\r)^{n-\a-1}\, d\t\, y_{(n-1)}(t)\right]_{t=a}^{t=b}\\
&\DS - \frac{\r^{1-n+\a}}{\Gamma(n-\alpha)}\int_a^b\frac{d}{dt}\left(\int_t^b x(\t)(\t^\r-t^\r)^{n-\a-1}\, d\t\right)\, t^{1-\r}\frac{d}{dt}y_{(n-2)}(t)\,dt\\
&\DS = \left[ {\mathcal{I}_{b-}^{n-\a,\r}}(t^{1-\r}x(t)) \, y_{(n-1)}(t)\right]_{t=a}^{t=b}\\
&\DS +\int_a^b -t^{1-\r}\frac{d}{dt}  {\mathcal{I}_{b-}^{n-\a,\r}}(t^{1-\r}x(t)) \, \frac{d}{dt}y_{(n-2)}(t)\,dt.\\
\end{array}$$
Integrating once more by parts, we obtain
$$\begin{array}{ll}
\DS \int_a^b x(t) \, \LD y(t) \, dt
&\DS = \left[ \sum_{k=0}^1 \left(-t^{1-\r}\frac{d}{dt}\right)^k{\mathcal{I}_{b-}^{n-\a,\r}}(t^{1-\r}x(t)) \, y_{(n-k-1)}(t)\right]_{t=a}^{t=b}\\
&\DS + \int_a^b\left(-t^{1-\r}\frac{d}{dt}\right)^2  {\mathcal{I}_{b-}^{n-\a,\r}}(t^{1-\r}x(t)) \, \frac{d}{dt}y_{(n-3)}(t)\,dt.
\end{array}$$
If we integrate by parts $n-3$ more times, we get
$$\begin{array}{ll}
\DS \int_a^b x(t) \, \LD y(t) \, dt
&\DS = \left[ \sum_{k=0}^{n-2} \left(-t^{1-\r}\frac{d}{dt}\right)^k{\mathcal{I}_{b-}^{n-\a,\r}}(t^{1-\r}x(t)) \, y_{(n-k-1)}(t)\right]_{t=a}^{t=b}\\
&\DS + \int_a^b\left(-t^{1-\r}\frac{d}{dt}\right)^{n-1}  {\mathcal{I}_{b-}^{n-\a,\r}}(t^{1-\r}x(t)) \, \frac{d}{dt}y(t)\,dt.
\end{array}$$
The formula follows integrating by parts once more the last integral.
\end{proof}

\section{The Gronwall inequality}\label{sec:Gronwall}

The Gronwall inequality plays a central role in the theory of differential equations, since it allows to estimate the difference between two solutions of two differential equations $\dot{x}(t)=f(t,x(t))$ and $\dot{x}(t)=g(t,x(t))$, in terms of the difference of the two initial conditions for each of the two differential equations, and the difference between the two dynamic equations  $f$ and $g$ (see e.g. \cite{Dragomir}).
Recently, the Gronwall inequality has been generalized for the study of fractional differential equations, with dependence on the Riemann--Liouville fractional derivative \cite{Ye} and for the Hadamard fractional derivative \cite{Gong}. Here we present a more general form, valid for the Katugampola fractional derivative.

\begin{theorem}  Let $u,v$ be two integrable functions and $g$ a continuous function, with domain $[a,b]$. Assume that
\begin{enumerate}
\item $u$ and $v$ are nonnegative;
\item $g$ is nonnegative and nondecreasing.
\end{enumerate}
If
$$u(t)\leq v(t)+g(t)\r^{1-\a}\int_a^t\t^{\r-1}(t^\r-\t^\r)^{\a-1}u(\t)\,d\t, \quad \forall t\in[a,b],$$
then
$$u(t)\leq v(t)+\int_a^t\sum_{k=1}^\infty\frac{\r^{1-k\a}(g(t)\Gamma(\a))^k}{\Gamma(k\a)}\t^{\r-1}(t^\r-\t^\r)^{k\a-1}v(\t)\,d\t, \quad \forall t\in[a,b].$$
In addition, if $v$ is nondecreasing, then
$$u(t)\leq v(t)E_\a\left[g(t)\Gamma(\a)\left(\frac{t^\r-a^\r}{\r}\right)^\a\right], \quad \forall t\in[a,b].$$
\end{theorem}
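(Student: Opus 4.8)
The plan is to rewrite the hypothesis in operator form and iterate. Define the positive linear operator
$$B\phi(t)=g(t)\,\r^{1-\a}\int_a^t\t^{\r-1}(t^\r-\t^\r)^{\a-1}\phi(\t)\,d\t=g(t)\Gamma(\a)\,\LI\phi(t),$$
so that the assumption reads $u\le v+Bu$. Since $g\ge0$ and the kernel is nonnegative, $B$ is linear and order-preserving; applying it repeatedly to $u\le v+Bu$ and using linearity gives, for every $n$,
$$u(t)\le v(t)+\sum_{k=1}^{n-1}B^k v(t)+B^n u(t).$$
First I would establish, by induction on $k$, the pointwise estimate
$$B^k\phi(t)\le\frac{(g(t)\Gamma(\a))^k\,\r^{1-k\a}}{\Gamma(k\a)}\int_a^t\t^{\r-1}(t^\r-\t^\r)^{k\a-1}\phi(\t)\,d\t,$$
valid for any nonnegative integrable $\phi$, the case $k=1$ being the definition of $B$.

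The inductive step is where the real work lies. Feeding the bound for $B^k\phi$ into the definition of $B^{k+1}\phi$, I would use the monotonicity of $g$ (so that $g(\t)\le g(t)$ for $\t\le t$) to pull the factor $(g(t)\Gamma(\a))^{k+1}$ outside the resulting double integral, then apply Dirichlet's interchange of the order of integration. After the substitution $w=\t^\r$ the inner integral becomes a Beta integral equal to $\tfrac1\r(t^\r-s^\r)^{(k+1)\a-1}B(\a,k\a)$, and the identity $B(\a,k\a)=\Gamma(\a)\Gamma(k\a)/\Gamma((k+1)\a)$ collapses the constants exactly to the claimed form, the $\Gamma(k\a)$ cancelling and one power of $\r$ being consumed. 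I expect this Beta-function reduction, together with the careful bookkeeping of the powers of $\r$ and the $\Gamma$-factors, to be the main obstacle.

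To close the iteration I would show $B^n u(t)\to0$. For $n\a\ge1$ the estimate above yields $B^n u(t)\le C\,(g(b)\Gamma(\a))^n\,\r^{1-n\a}(b^\r-a^\r)^{n\a-1}/\Gamma(n\a)$, where $C=\int_a^b\t^{\r-1}u(\t)\,d\t<\infty$ since $u$ is integrable and $a>0$; as $\Gamma(n\a)$ grows faster than any geometric factor, the right side tends to $0$. Letting $n\to\infty$ and interchanging the summation with the integration, which is legitimate by positivity of all terms (monotone convergence), gives the first conclusion upon taking $\phi=v$ in the iterated estimate.

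For the final assertion, if $v$ is nondecreasing then $v(\t)\le v(t)$ on $[a,t]$, so $v(t)$ factors out of the integral. Each remaining term integrates elementarily: with $w=\t^\r$,
$$\int_a^t\t^{\r-1}(t^\r-\t^\r)^{k\a-1}\,d\t=\frac{(t^\r-a^\r)^{k\a}}{\r\,k\a},$$
and using $k\a\,\Gamma(k\a)=\Gamma(k\a+1)$ the series rearranges into
$$\sum_{k=1}^\infty\frac{\left[g(t)\Gamma(\a)\left(\frac{t^\r-a^\r}{\r}\right)^\a\right]^k}{\Gamma(k\a+1)}=E_\a\!\left[g(t)\Gamma(\a)\left(\frac{t^\r-a^\r}{\r}\right)^\a\right]-1.$$
Adding back the isolated $v(t)$, which supplies the missing $k=0$ term of the Mittag--Leffler series, gives $u(t)\le v(t)\,E_\a[\,g(t)\Gamma(\a)((t^\r-a^\r)/\r)^\a\,]$, as claimed.
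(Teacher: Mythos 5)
Your proposal is correct and follows essentially the same route as the paper: the same iterated operator $\Psi$ (your $B$), the same induction via monotonicity of $g$, Dirichlet's formula and the Beta-function identity, and the same factoring of $v(t)$ to reach the Mittag--Leffler bound. The only (cosmetic) difference is in showing the remainder term vanishes: you bound the kernel by $(b^\r-a^\r)^{n\a-1}$ and invoke the super-geometric growth of $\Gamma(n\a)$ directly, while the paper bounds $g$ by a constant $M$ and applies the ratio test to the series $\sum_n (M\Gamma(\a))^n/\Gamma(n\a)$ --- the same idea, and if anything your version is slightly more explicit about the $n$-dependent factors.
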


\begin{proof} Define the functional
$$\Psi x = g(t)\r^{1-\a}\int_a^t \t^{\r-1}(t^\r-\t^\r)^{\a-1}x(\t)\,d\t.$$
Then $u(t)\leq v(t)+\Psi u(t)$. Iterating consecutively,  we obtain for $n\in\mathbb N$,
$$u(t)\leq \sum_{k=0}^{n-1}\Psi^k v(t)+\Psi^n u(t).$$
Let us prove, by mathematical induction, that if $x$ is a nonnegative function, then
$$\Psi^k x(t) \leq \r^{1-k\a}\int_a^t \frac{(g(t)\Gamma(\a))^k}{\Gamma(k\a)}\t^{\r-1}(t^\r-\t^\r)^{k\a-1}x(\t)\,d\t.$$
For $k=1$ is obvious. Suppose that the formula if valid for $k\in\mathbb N$. Then,
$$\Psi^{k+1} x(t)=\Psi\Psi^k x(t) \leq g(t)\r^{1-\a}\int_a^t \t^{\r-1}(t^\r-\t^\r)^{\a-1}
 \r^{1-k\a}\int_a^\t \frac{(g(\t)\Gamma(\a))^k}{\Gamma(k\a)}s^{\r-1}(\t^\r-s^\r)^{k\a-1}x(s)\,ds\,d\t.$$
 Since $g$ is nondecreasing, $g(\t)\leq g(t)$, for all $\t\leq t$, and so
 $$\Psi^{k+1} x(t)\leq (g(t))^{k+1}\r^{2-(k+1)\a} \frac{(\Gamma(\a))^k}{\Gamma(k\a)} \int_a^t \int_a^\t  \t^{\r-1}(t^\r-\t^\r)^{\a-1}
s^{\r-1}(\t^\r-s^\r)^{k\a-1}x(s)\,ds\,d\t.$$
Using the  Dirichlet's formula, we get
$$\Psi^{k+1} x(t)\leq (g(t))^{k+1}\r^{2-(k+1)\a} \frac{(\Gamma(\a))^k}{\Gamma(k\a)} \int_a^t \t^{\r-1}x(\t) \int_\t^t  s^{\r-1}(t^\r-s^\r)^{\a-1}
(s^\r-\t^\r)^{k\a-1}\,ds\,d\t.$$
Evaluating the inner integral in a similar way as was done in the proof of Theorem \ref{thm:IntDer}, we obtain
$$\int_\t^t  s^{\r-1}(t^\r-s^\r)^{\a-1}(s^\r-\t^\r)^{k\a-1}\,ds=\frac{\Gamma(\a)\Gamma(k\a)}{\r\Gamma(k\a+\a)}(t^\r-\t^\r)^{(k+1)\a-1}.$$
Then,
$$\Psi^{k+1} x(t) \leq \r^{1-(k+1)\a}\int_a^t \frac{(g(t)\Gamma(\a))^{k+1}}{\Gamma((k+1)\a)}\t^{\r-1}(t^\r-\t^\r)^{(k+1)\a-1}x(\t)\,d\t,$$
proving the desired. Let us prove now that $\Psi^n u(t)\to0$ as $n\to\infty$. First, using the continuity of $g$ in the interval $[a,b]$, we ensure the existence of a constant $M>0$ such that $g(t)\leq M$, for all $t\in[a,b]$. Then
$$0\leq \Psi^n u(t) \leq \r^{1-n\a}\int_a^t \frac{(M\Gamma(\a))^n}{\Gamma(n\a)}\t^{\r-1}(t^\r-\t^\r)^{n\a-1}u(\t)\,d\t.$$
Consider the series
$$\sum_{n=1}^\infty \frac{(M\Gamma(\a))^n}{\Gamma(n\a)}.$$
If we apply the ratio test to the series, and the asymptotic approximation
$$\lim_{n\to\infty}\frac{\Gamma(n\a)(n\a)^\a}{\Gamma(n\a+\a)}=1,$$
we get
$$ \lim_{n\to\infty}\frac{\Gamma(n\a)}{\Gamma(n\a+\a)}=0.$$
Thus, the series converges and therefore $\Psi^n u(t)\to0$ as $n\to\infty$.
In conclusion, we have
$$u(t)\leq \sum_{k=0}^\infty\Psi^k v(t)\leq v(t)+\int_a^t\sum_{k=1}^\infty\frac{\r^{1-k\a}(g(t)\Gamma(\a))^k}{\Gamma(k\a)}\t^{\r-1}(t^\r-\t^\r)^{k\a-1}v(\t)\,d\t.$$
For the second case, suppose now that $v$ is nondecreasing. So, for all $\t\in[a,t]$, we have $v(\t)\leq v(t)$ and so
$$\begin{array}{ll}
u(t)& \leq \DS v(t)\left[1+\sum_{k=1}^\infty\frac{\r^{1-k\a}(g(t)\Gamma(\a))^k}{\Gamma(k\a)}\int_a^t\t^{\r-1}(t^\r-\t^\r)^{k\a-1}\,d\t\right]\\
& = \DS v(t)\left[1+\sum_{k=1}^\infty\frac{\r^{-k\a}(g(t)\Gamma(\a)(t^\r-a^\r)^\a)^k}{\Gamma(k\a+1)}\right]\\
&=\DS v(t)E_\a\left[g(t)\Gamma(\a)\left(\frac{t^\r-a^\r}{\r}\right)^\a\right].
\end{array}$$
\end{proof}

For the right fractional operator, the following result is proven in a similar way.

\begin{theorem}  Let $u,v$ be two integrable functions and $g$ a continuous function, with domain $[a,b]$. Assume that
\begin{enumerate}
\item $u$ and $v$ are nonnegative;
\item $g$ is nonnegative and nonincreasing.
\end{enumerate}
If
$$u(t)\leq v(t)+g(t)\r^{1-\a}\int_t^b\t^{\r-1}(\t^\r-t^\r)^{\a-1}u(\t)\,d\t, \quad \forall t\in[a,b],$$
then
$$u(t)\leq v(t)+\int_t^b\sum_{k=1}^\infty\frac{\r^{1-k\a}(g(t)\Gamma(\a))^k}{\Gamma(k\a)}\t^{\r-1}(\t^\r-t^\r)^{k\a-1}v(\t)\,d\t, \quad \forall t\in[a,b].$$
In addition, if $v$ is nonincreasing, then
$$u(t)\leq v(t)E_\a\left[g(t)\Gamma(\a)\left(\frac{b^\r-t^\r}{\r}\right)^\a\right], \quad \forall t\in[a,b].$$
\end{theorem}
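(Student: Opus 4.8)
The plan is to mirror, \emph{mutatis mutandis}, the argument given for the left-sided case, the only structural changes being the direction of the integration interval and the monotonicity hypothesis on $g$. First I would introduce the right-sided functional
$$\Psi x(t) = g(t)\r^{1-\a}\int_t^b \t^{\r-1}(\t^\r-t^\r)^{\a-1}x(\t)\,d\t,$$
so that the hypothesis reads $u(t)\leq v(t)+\Psi u(t)$. Iterating this inequality and using that $\Psi$ is monotone on nonnegative functions gives
$$u(t)\leq \sum_{k=0}^{n-1}\Psi^k v(t)+\Psi^n u(t),\quad n\in\mathbb N.$$

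The core of the proof is the inductive estimate
$$\Psi^k x(t)\leq \r^{1-k\a}\int_t^b \frac{(g(t)\Gamma(\a))^k}{\Gamma(k\a)}\t^{\r-1}(\t^\r-t^\r)^{k\a-1}x(\t)\,d\t,$$
valid for every nonnegative $x$. The base case $k=1$ is immediate. For the induction step I would substitute the hypothesis for $\Psi^k$ into $\Psi^{k+1}x=\Psi\Psi^k x$, producing a double integral over the triangle $t\leq\t\leq s\leq b$. Here is where the monotonicity of $g$ enters: since $g$ is \emph{nonincreasing} and the inner variable satisfies $\t\geq t$, we have $g(\t)\leq g(t)$, allowing $g(\t)^k$ to be bounded by $g(t)^k$. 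Applying Dirichlet's formula to interchange the order of integration, the inner integral becomes
$$\int_t^s \t^{\r-1}(\t^\r-t^\r)^{\a-1}(s^\r-\t^\r)^{k\a-1}\,d\t,$$
which, after the change of variables $w=(\t^\r-t^\r)/(s^\r-t^\r)$ and the Beta-function identity $B(\a,k\a)=\Gamma(\a)\Gamma(k\a)/\Gamma((k+1)\a)$, evaluates to $\frac{\Gamma(\a)\Gamma(k\a)}{\r\Gamma((k+1)\a)}(s^\r-t^\r)^{(k+1)\a-1}$; collecting the constants closes the induction.

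To pass to the limit $n\to\infty$, I would bound $g\leq M$ on $[a,b]$ by continuity and note that the estimate above gives $0\leq\Psi^n u(t)\leq \r^{1-n\a}\frac{(M\Gamma(\a))^n}{\Gamma(n\a)}\int_t^b\t^{\r-1}(\t^\r-t^\r)^{n\a-1}u(\t)\,d\t$; the ratio test together with the asymptotic $\Gamma(n\a)(n\a)^\a/\Gamma(n\a+\a)\to1$ shows that $\sum_n (M\Gamma(\a))^n/\Gamma(n\a)$ converges, hence $\Psi^n u(t)\to0$. This yields the first conclusion $u(t)\leq\sum_{k=0}^\infty\Psi^k v(t)$. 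For the final assertion, if $v$ is nonincreasing then $v(\t)\leq v(t)$ for $\t\geq t$, so $v(t)$ may be factored out; using $\int_t^b\t^{\r-1}(\t^\r-t^\r)^{k\a-1}\,d\t=(b^\r-t^\r)^{k\a}/(\r k\a)$ and $k\a\,\Gamma(k\a)=\Gamma(k\a+1)$, the series collapses to the Mittag--Leffler function evaluated at $g(t)\Gamma(\a)((b^\r-t^\r)/\r)^\a$. The only genuine subtlety---the main obstacle---is keeping the geometry of the swapped domain and the direction of the monotonicity inequality consistent: because the right-sided kernel integrates over $[t,b]$, the relevant comparison is $g(\t)\leq g(t)$ for $\t\geq t$, which is precisely why the hypothesis on $g$ must here be \emph{nonincreasing} rather than nondecreasing.
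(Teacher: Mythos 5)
Your proposal is correct and is exactly the argument the paper intends: the paper itself only remarks that this right-sided result ``is proven in a similar way,'' and your write-up faithfully mirrors the left-sided proof, with the one genuine adaptation correctly identified --- since the kernel now integrates over $[t,b]$, the bound $g(\t)\leq g(t)$ for $\t\geq t$ requires $g$ to be nonincreasing. All the computational details (the Dirichlet swap over $t\leq\t\leq s\leq b$, the Beta-function evaluation of the inner integral, the ratio-test argument for $\Psi^n u\to 0$, and the collapse to the Mittag--Leffler function) check out.
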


Using the Gronwall inequality, we can relate  solutions of two fractional differential equations. Consider the following fractional differential equation
\begin{equation}\label{FDE}\left\{
\begin{array}{l}
\LD x(t)=f(t,x(t))\\
x^{(i)}(a)=x_a^i, \quad i=0,\ldots,n-1,
\end{array}  \right.\end{equation}
where $f:[a,b]\times\mathbb R\to\mathbb R$ is a continuous function, $\a\in(n-1,n)$ and $x_a^i$ are fixed reals, for $i=0,\ldots,n-1$. Applying the fractional integral operator $\LI$ to both sides of the fractional differential  equation in system \eqref{FDE} and using Theorem \ref{thm:DerInt}, we get
\begin{equation}\label{FDE:Volterra}\begin{array}{ll}
x(t)&=\DS\sum_{k=0}^{n-1}\frac{\r^{-k}}{k!}(t^\r-a^\r)^kx_{(k)}(a)+\LI f(t,x(t))\\
&=\DS\sum_{k=0}^{n-1}\frac{\r^{-k}}{k!}(t^\r-a^\r)^kx_{(k)}(a)+\frac{\r^{1-\a}}{\Gamma(\a)}\int_a^t \t^{\r-1}(t^\r-\t^\r)^{\a-1}f(\t,x(\t)) d\t.
\end{array}\end{equation}
Conversely, if $x$ satisfies Equation \eqref{FDE:Volterra}, then $x$ satisfies system \eqref{FDE}. This is proven applying the fractional derivative operator $\LD$ to both sides of Equation \eqref{FDE:Volterra}, using Theorem \ref{thm:IntDer} and the formula
$$\LD (t^\r-a^\r)^k=0, \quad \forall k\in\{0,1,\ldots,n-1\}.$$

\begin{theorem} Let $f,g:[a,b]\times\mathbb R\to\mathbb R$ be two continuous functions, and $x,y$ solutions of the two following systems
$$\left\{
\begin{array}{l}
\LD x(t)=f(t,x(t))\\
x^{(i)}(a)=x_a^i, \quad i=0,\ldots,n-1,
\end{array}  \right.$$
and
$$\left\{
\begin{array}{l}
\LD y(t)=g(t,y(t))\\
y^{(i)}(a)=y_a^i, \quad i=0,\ldots,n-1.
\end{array}  \right.$$
Suppose that there exist
\begin{enumerate}
\item a positive constant $C$  such that
$$|g(t,y_1)-g(t,y_2)|\leq C |y_1-y_2|, \quad \forall t\in[a,b]\,\forall y_1,y_2\in\mathbb R;$$
\item a continuous function $\psi:[a,b]\to\mathbb R_0^+$ such that
$$|f(t,x(t))-g(t,x(t))|\leq \psi(t), \quad \forall t\in[a,b].$$
\end{enumerate}
Define the function $v:[a,b]\to\mathbb R$ by
$$v(t)=\sum_{k=0}^{n-1}\frac{\r^{-k}}{k!}(t^\r-a^\r)^k\left|x_{(k)}(a)-y_{(k)}(a)\right|+
\frac{\r^{1-\a}}{\Gamma(\a)}\int_a^t \t^{\r-1}(t^\r-\t^\r)^{\a-1}\psi(\t) d\t.$$
Then, for all $t\in[a,b]$,
$$|x(t)-y(t)|\leq v(t)+\int_a^t\sum_{k=1}^\infty\frac{\r^{1-k\a}C^k}{\Gamma(k\a)}\t^{\r-1}(t^\r-\t^\r)^{k\a-1}v(\t)\,d\t.$$
\end{theorem}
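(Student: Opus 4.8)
The plan is to realize this estimate as a direct application of the (left-sided) fractional Gronwall inequality established earlier in this section, taking the constant $C/\Gamma(\a)$ as the coefficient function. First I would replace the two initial value problems by their equivalent Volterra integral equations. By the equivalence recorded in Equation \eqref{FDE:Volterra}, applied separately to each system, the solutions satisfy
$$x(t)=\sum_{k=0}^{n-1}\frac{\r^{-k}}{k!}(t^\r-a^\r)^kx_{(k)}(a)+\frac{\r^{1-\a}}{\Gamma(\a)}\int_a^t \t^{\r-1}(t^\r-\t^\r)^{\a-1}f(\t,x(\t)) \, d\t,$$
together with the identical expression for $y$ in terms of $g$ and the data $y_{(k)}(a)$. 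Subtracting these two identities and applying the triangle inequality, both to the boundary sum and under the integral, bounds $|x(t)-y(t)|$ by the corresponding sum of $\left|x_{(k)}(a)-y_{(k)}(a)\right|$ plus the fractional integral of $|f(\t,x(\t))-g(\t,y(\t))|$.

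Next I would control the integrand. Inserting the intermediate term $g(\t,x(\t))$, the triangle inequality together with hypothesis (2) and the Lipschitz hypothesis (1) on $g$ give
$$|f(\t,x(\t))-g(\t,y(\t))|\leq |f(\t,x(\t))-g(\t,x(\t))|+|g(\t,x(\t))-g(\t,y(\t))|\leq \psi(\t)+C|x(\t)-y(\t)|.$$
Substituting this bound back and separating the two contributions, the boundary sum and the fractional integral of $\psi$ combine to form exactly the function $v(t)$ from the statement, while the Lipschitz term stays inside the integral. This yields
$$|x(t)-y(t)|\leq v(t)+\frac{C}{\Gamma(\a)}\,\r^{1-\a}\int_a^t \t^{\r-1}(t^\r-\t^\r)^{\a-1}|x(\t)-y(\t)| \, d\t.$$

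Finally I would invoke the fractional Gronwall inequality proven above, with $u(t)=|x(t)-y(t)|$, the same $v$, and the continuous nondecreasing function chosen to be the positive constant $g(t)\equiv C/\Gamma(\a)$. Its hypotheses are all met: $u$ and $v$ are nonnegative and integrable, and a positive constant is nonnegative and nondecreasing. Since then $g(t)\Gamma(\a)=C$, the conclusion of that theorem is precisely the asserted inequality. I do not expect a genuine obstacle here, as the result is essentially a corollary; the only point demanding care is the bookkeeping in the middle step, namely checking that the non-integral part of the estimate collapses exactly onto the definition of $v(t)$, which is routine once the two Volterra equations are written side by side.
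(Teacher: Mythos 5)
Your proposal is correct and follows essentially the same route as the paper: pass to the Volterra integral representations via Equation \eqref{FDE:Volterra}, insert the intermediate term $g(\t,x(\t))$ to invoke hypotheses (1) and (2), and then apply the left-sided fractional Gronwall inequality with $u(t)=|x(t)-y(t)|$ and the constant coefficient function $C/\Gamma(\a)$. The only difference is cosmetic: you spell out the verification of the Gronwall theorem's hypotheses and the identification $g(t)\Gamma(\a)=C$, which the paper leaves implicit.
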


\begin{proof} Define $u(t)=|x(t)-y(t)|$. Then
$$\begin{array}{ll}
u(t)&\DS\leq \sum_{k=0}^{n-1}\frac{\r^{-k}}{k!}(t^\r-a^\r)^k\left|x_{(k)}(a)-y_{(k)}(a)\right|\\
&  \quad \DS+\frac{\r^{1-\a}}{\Gamma(\a)}\int_a^t \t^{\r-1}(t^\r-\t^\r)^{\a-1}|f(\t,x(\t))-g(\t,y(\t))| d\t\\
&\DS\leq \sum_{k=0}^{n-1}\frac{\r^{-k}}{k!}(t^\r-a^\r)^k\left|x_{(k)}(a)-y_{(k)}(a)\right|\\
&  \quad \DS+\frac{\r^{1-\a}}{\Gamma(\a)}\int_a^t \t^{\r-1}(t^\r-\t^\r)^{\a-1}\left(|f(\t,x(\t))-g(\t,x(\t))|+|g(\t,x(\t))-g(\t,y(\t))| \right)d\t.
\end{array}$$
Using the relations
$$|f(\t,x(\t))-g(\t,x(\t))|\leq \psi(\t)\quad \mbox{and} \quad |g(\t,x(\t))-g(\t,y(\t))|\leq C |x(\t)-y(\t)|,$$
and the Gronwall inequality, we prove the result.
\end{proof}

In particular, when $g=f$, we obtain a simpler formula:
\begin{equation}\label{compsolu}\begin{array}{ll}
|x(t)-y(t)|&\DS\leq \sum_{k=0}^{n-1}\frac{\r^{-k}}{k!}(t^\r-a^\r)^k\left|x_{(k)}(a)-y_{(k)}(a)\right|\\
&  \quad \DS+\int_a^t\sum_{k=1}^\infty\frac{\r^{1-k\a}C^k}{\Gamma(k\a)}\t^{\r-1}(t^\r-\t^\r)^{k\a-1}
\sum_{k=0}^{n-1}\frac{\r^{-k}}{k!}(\t^\r-a^\r)^k\left|x_{(k)}(a)-y_{(k)}(a)\right|\,d\t.
\end{array}\end{equation}

Also, from Equation \eqref{compsolu}, we see that the solution of system \eqref{FDE} is unique.


\section*{Acknowledgments}

The author is very grateful to an anonymous referee, for valuable remarks and comments that improved this paper.
Work supported by Portuguese funds through the CIDMA - Center for Research and Development in Mathematics and Applications, and the Portuguese Foundation for Science and Technology (FCT-Funda\c{c}\~ao para a Ci\^encia e a Tecnologia), within project UID/MAT/04106/2013.


\end{document}